\newtheorem{theorem}{Theorem}[section]
\newtheorem{lemma}[theorem]{Lemma}
\newtheorem{proposition}[theorem]{Proposition}
\newtheorem{question}[theorem]{Question}
\newtheorem*{definition}{Definition}
\newtheorem{conjecture}[theorem]{Conjecture}
\newcommand{\be}{\begin{equation}}
\newcommand{\ee}{\end{equation}}
\newcommand{\bea}{\begin{eqnarray}}
\newcommand{\eea}{\end{eqnarray}}
\newcommand{\eps}{\varepsilon}
\newcommand{\vs}{\vspace{0.5cm}}
\newcommand{\vsv}{\vspace{0.12cm}}
\def\XXint#1#2#3{{\setbox0=\hbox{$#1{#2#3}{\int}$ }
\vcenter{\hbox{$#2#3$ }}\kern-.6\wd0}}
\begin{document}

\title{On compact Hermitian manifolds with\\ flat Gauduchon connections}

\author{Bo Yang}

\address{Bo Yang. School of Mathematical Sciences, Xiamen University, Xiamen, Fujian, 361005, China.}
\email{{boyang@xmu.edu.cn}}

\author{Fangyang Zheng}


\address{Fangyang Zheng. Department of Mathematics, The Ohio State
University, 231 West 18th Avenue, Columbus, OH 43210, USA}

\email{{zheng.31@osu.edu}}

\begin{abstract}
Given a Hermitian manifold $(M^n,g)$, the Gauduchon connections are
the one parameter family of Hermitian connections joining the Chern
connection and the Bismut connection. We will call $\nabla^s =
(1-\frac{s}{2})\nabla^c + \frac{s}{2}\nabla^b$ the $s$-Gauduchon
connection of $M$, where $\nabla^c$ and $\nabla^b$ are respectively
the Chern and Bismut connections. It is natural to ask when a
compact Hermitian manifold could admit a flat $s$-Gauduchon
connection. This is related to a question asked by Yau \cite{Yau}.
The cases with $s=0$ (a flat Chern connection) or $s=2$ (a flat
Bismut connection) are classified respectively  by Boothby
\cite{Boothby} in the 1950s or by Q. Wang and the authors recently
\cite{WYZ}. In this article, we observe that if either $s\geq
4+2\sqrt{3} \approx 7.46$ or $s\leq 4-2\sqrt{3}\approx 0.54$ and
$s\neq 0$, then $g$ is K\"ahler. We also show that, when $n=2$, $g$
is always K\"ahler unless $s=2$. Note that non-K\"ahler compact
Bismut flat surfaces are exactly those isosceles Hopf surfaces by
\cite{WYZ}.
\end{abstract}

\maketitle

\tableofcontents

\markleft{Gauduchon flat manifolds}

\markright{Gauduchon flat manifolds}

\section{Introduction}

S.T.\! Yau (\cite{Yau}) asked an interesting question on Hermitian
geometry.
\begin{question}[Problem 87 in \cite{Yau}] \label{Yau problem}
If the holonomy group of a compact Hermitian manifold can be reduced
to a proper subgroup of $U(n)$, can we say something nontrivial
about the manifold? The problem is that the connection need not to
be Riemannian.
\end{question}

Recall that on a Hermitian manifold $(M^n,g)$, there are a lot of
Hermitian connections, namely, linear connections $\nabla$
satisfying $\nabla g=0$ and $\nabla J=0$, where $J$ is the almost
complex structure of the complex manifold $M^n$.

\vsv

An important special case of Question \ref{Yau problem} is when the
Hermitian connection under consideration is flat, hence its holonomy
group is discrete. The question states:

\begin{question}\label{Yau problem more}
Classify compact Hermitian manifolds which admit flat Hermitian
connections.
\end{question}

One of the main difficulties in answering Question \ref{Yau problem
more} is that the linear space of Hermitian connections on a general
Hermitian manifold is of infinite dimension. Note that there are
three Hermitian connections relatively well studied in the
literature: the Chern connection $\nabla^c$, the Bismut connection
$\nabla^b$ (introduced in \cite{Bismut} and \cite{Yano}), and
$\nabla^{lv}$ which is the projection onto the holomorphic tangent
bundle of the Riemannian (Levi-Civita) connection of $g$. The latter
was also called the associated connection (\cite{GK}), Levi-Civita
connection (\cite{LY}, \cite{LY1}),  or the first canonical
connection, etc. It is well-known that these three connections form
a straight line in the space of all Hermitian connections, namely,
$\nabla^{lv}$ is the arithmetic average of the other two. In the
rest of this paper, we will fix the following notation which is
partly motivated by the work of Gauduchon \cite{Gauduchon2}:

\begin{definition} [Gauduchon connections]
The {\em $s$-Gauduchon connection} of $(M^n,g)$ is defined to be the
Hermitian connection
$ \nabla^s = (1-\frac{s}{2}) \nabla^c + \frac{s}{2} \nabla^b $,
where $s\in {\mathbb R}$.
\end{definition}

So $\nabla^0=\nabla^c$, $\nabla^2=\nabla^b$, and
$\nabla^1=\nabla^{lv}$. When $g$ is K\"ahler, every $s$-Gauduchon
connection $\nabla^s \equiv \nabla^c$ and is equal to the Riemannian
connection. When $g$ is not K\"ahler, $\nabla^s\neq \nabla^{s'}$
whenever $s\neq s'$. From the differential geometric point of view,
we want to study the curvature tensor $R^s$ of $\nabla^s$. As a
first step to understand Question \ref{Yau problem more}, we would
like to know what kind of compact Hermitian manifolds can have flat
$s$-Gauduchon connection $\nabla^s$.

\vsv

When $s=0$, the question is well understood and well-known. In 1958,
Boothby (\cite{Boothby}) proved that compact Hermitian manifolds
with flat Chern connection are exactly the quotients of complex Lie
groups, equipped with left invariant Hermitian metrics. An important
subset of this is the complex parallelizable manifolds, which are
classified by H.-C. Wang \cite{Wang}.

\vsv

For $s=2$, the recent work \cite{WYZ} of Q. Wang and the authors
classified all (including noncompact ones) Bismut flat manifolds.
Those compact Bismut flat manifolds are exactly the (finite
undercover of) compact local Samulson spaces. In more details, given
any compact Bismut flat manifold $M^n$, its universal cover is a
Samelson space, namely, $G\times {\mathbb R}^k$ equipped with a
bi-invariant metric and a left invariant complex structure. Here $G$
is a simply-connected compact semisimple Lie group, and $0\leq k
\leq 2n$. In particular, compact non-K\"ahler Bismut flat surfaces
are exactly those isosceles Hopf surfaces, and in dimension three,
their universal cover is either a central Calabi-Eckmann threefold
$S^3\times S^3$, or $({\mathbb C}^2\setminus \{ 0\} )\times {\mathbb
C}$. The readers are referred to \cite{WYZ} for more details.

\vsv

For $s=1$, Ganchev and Kassabov \cite{GK} proved an interesting
local characterization theorem for Hermitian manifolds with flat
$\nabla^1$, which they called the associated connection. More precisely,
they showed that for any $n\geq 2$, if a Hermitian manifold $(M^n,g)$
has flat $\nabla^1$ and is conformal to a K\"ahler metric
$\tilde{g} = e^{2u}g$, then $\tilde{g}$ has constant holomorphic
sectional curvature. Conversely, given any K\"ahler metric $\tilde{g}$
with constant holomorphic sectional curvature, there always exists $g$
conformal $\tilde{g}$ such that $g$ has flat $\nabla^1$.

\vsv

To sum up, we propose the following version of Question \ref{Yau
problem more}:

\begin{conjecture}\label{YZconj}
If $s\neq 0, 2$, then any compact Hermitian manifold $(M^n,g)$ which
admits a flat $s$-Gauduchon connection must be K\"ahler, thus being
a finite undercover
 of a flat complex torus.
\end{conjecture}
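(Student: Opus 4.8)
The plan is to regard the curvature of the $s$-Gauduchon connection as a quadratic polynomial in $s$ whose coefficients are universal expressions in the Chern curvature, the Chern torsion $T$, and its covariant derivative $\nabla^c T$, impose flatness, and then integrate suitable traces over the compact manifold to force $T\equiv 0$. First I would fix a local unitary frame and record the structural formula
\[
\nabla^s = \nabla^c + \tfrac{s}{2}\,\gamma, \qquad \gamma := \nabla^b - \nabla^c,
\]
where $\gamma$ is a tensor depending linearly on $T$ and $\overline{T}$. Consequently
\[
R^s = R^c + \tfrac{s}{2}\,d^{\nabla^c}\gamma + \tfrac{s^2}{4}\,\gamma\wedge\gamma
\]
is a quadratic polynomial in $s$. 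Writing out components, the $(1,1)$-part of $R^s$ becomes $R^c$ plus terms linear in $\nabla^c T$ and quadratic in $(T,\overline{T})$, while the $(2,0)$- and $(0,2)$-parts are purely first order in $\nabla^c T$ plus quadratic torsion terms. Imposing $R^s=0$ then splits into the separate vanishing of each type.

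Next I would exploit these equations. From the $(2,0)$/$(0,2)$ vanishing one extracts first-order relations expressing symmetrized components of $\nabla^c T$ through quadratic torsion terms with $s$-dependent weights; from the $(1,1)$ vanishing one obtains a relation tying $R^c$, $\nabla^c T$, and $T\overline{T}$ together. Taking the two Chern--Ricci traces and the full scalar trace produces scalar identities that are quadratic in $s$. On the compact manifold I would integrate these identities and discard the total-divergence first-order terms by Stokes' theorem; this is the only place compactness is used. The result is an integral identity of the form
\[
\int_M \Big(\alpha(s)\,|T|^2 + \beta(s)\,|\eta|^2\Big)\,dV = 0,
\]
where $\eta$ is the torsion trace ($1$-form) and $\alpha,\beta$ are explicit quadratics in $s$.

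The discriminant governing definiteness of this quadratic form in the torsion is
\[
s^2 - 8s + 4 = \big(s-(4-2\sqrt{3})\big)\big(s-(4+2\sqrt{3})\big),
\]
so for $s\le 4-2\sqrt{3}$ or $s\ge 4+2\sqrt{3}$ the form is (semi)definite of the correct sign, forcing $T\equiv 0$ and hence $g$ K\"ahler, which recovers the range announced in the abstract. Once $g$ is K\"ahler the flat connection $\nabla^s$ coincides with the (flat) Levi-Civita connection, and a compact flat K\"ahler manifold is a finite quotient of a flat complex torus, yielding the full conclusion on that range.

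The main obstacle is the intermediate interval $4-2\sqrt{3}<s<4+2\sqrt{3}$ with $s\neq 0,2$, where this single traced quadratic form is indefinite and the bare integration is inconclusive. To close the gap I would try to use information beyond the traces: feed the first-order relations for $\nabla^c T$ back into the second Bianchi identity for $R^s=0$ to produce a Bochner-type second-order identity for $T$, and then combine several independent contractions so that the effective quadratic form in $(T,\nabla^c T)$ becomes definite on a wider $s$-range. I expect the Bochner step to be the crux: controlling the sign of the zeroth-order curvature--torsion coupling uniformly for all $s$ in the open interval is precisely what obstructs a clean argument, which is why the full statement is posed as a conjecture rather than a theorem. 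In the surface case $n=2$ the torsion has so few independent components that this indefiniteness disappears and the conclusion can be obtained directly, in agreement with the stated two-dimensional result.
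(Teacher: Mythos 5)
The statement you are asked to prove is posed in the paper as a conjecture, and the paper itself does not prove it in full; it only establishes the cases $s\geq 4+2\sqrt{3}$ or $s\leq 4-2\sqrt{3}$, $s\neq 0$ (Theorem \ref{intro2}, slightly sharpened in Theorem \ref{intro3}), the locally conformally K\"ahler case away from $b_n^{\pm}$ (Theorem \ref{intro4}), and the full surface case (Theorem \ref{intro1}). Your outline of the integral identity $\int_M(\alpha(s)|T|^2+\beta(s)|\eta|^2)\,dV=0$ with the quadratic $s^2-8s+4$ controlling definiteness is exactly the paper's Lemma \ref{lemma 3-5}, $(8s-s^2-4)\int_M|\eta|^2\,\omega^n=s^2\int_M|T|^2\,\omega^n$, and your derivation route (expanding $R^s$ as a quadratic in $s$ around $\nabla^c$, tracing, and integrating by parts) is a legitimate variant of the paper's method, which instead works in a $\nabla^s$-parallel unitary frame, sets $\theta=-s\gamma$ in the structure equations, and reads off the torsion identities of Lemma \ref{lemma 3-1} directly. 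So the part of your argument that you actually carry through recovers Theorem \ref{intro2}, and your honest assessment that the interval $(4-2\sqrt{3},\,4+2\sqrt{3})$ is out of reach of the traced identity is accurate.

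Two concrete gaps remain. First, the proposed Bochner step for the intermediate interval is only a plan, not an argument; it is precisely the open content of the conjecture for $n\geq 3$, and nothing in your sketch indicates how the sign of the curvature--torsion coupling would be controlled there. Second, your claim that in dimension two ``the indefiniteness disappears and the conclusion can be obtained directly'' understates the difficulty: with $n=2$ one has $|T|^2=2|\eta|^2$, and substituting into the integral identity yields $(3s-2)(s-2)\int_{M^2}|T|^2\,\omega^2=0$, which still leaves the exceptional value $s=\frac{2}{3}$ untouched. The paper must handle that value by a genuinely different, second-order argument: it derives the full first-order system for the torsion components $a=T^1_{12}$, $b=T^2_{12}$ from Lemma \ref{lemma 3-1} (not just their traces), computes $\partial\overline{\partial}\log(\lambda+\eps)$ for $\lambda=|a|^2+|b|^2$, and integrates against $\omega$ using $\partial\overline{\partial}\omega=0$ to force $\lambda\equiv 0$. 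If you intend your proposal to cover Theorem \ref{intro1}, you need to supply this step; as a proof of the conjecture itself, the proposal establishes only the ranges already covered by Theorem \ref{intro2}.
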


The main purpose of this article is to confirm the above conjecture
in the $n=2$ case, namely, we have the following

\begin{theorem}\label{intro1}
Let $(M^2,g)$ be a compact Hermitian surface with a flat
$s$-Gauduchon connection $\nabla^s$. If $s\neq 2$, then $g$ is
K\"ahler.
\end{theorem}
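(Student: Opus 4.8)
The plan is to compute the full curvature tensor $R^s$ of $\nabla^s$ from the Chern data and the torsion, impose flatness, and show that in complex dimension two the resulting equations force the torsion to vanish whenever $s\neq 2$.

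First I would fix a local unitary coframe $\vphi_1,\vphi_2$ for $(M^2,g)$ and write the Chern structure equations $d\vphi_i=-\theta_{ij}\wedge\vphi_j+\tau_i$, where $\tau_i=\tfrac12 T^i_{jk}\,\vphi_j\wedge\vphi_k$ encodes the Chern torsion and the Chern curvature $\Theta=d\theta+\theta\wedge\theta$ is of pure type $(1,1)$. Since any two Hermitian connections differ by a tensor, I write $\nabla^s=\nabla^c+\tfrac{s}{2}\gam$, where $\gam=\theta^b-\theta^c$ is the ($\mathfrak u(n)$-valued, torsion-linear) contorsion form of the Bismut connection. A direct computation then gives
\[
\Theta^s=\Theta+\tfrac{s}{2}\,D\gam+\tfrac{s^2}{4}\,\gam\wedge\gam,
\]
with $D$ the Chern-covariant exterior derivative. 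Splitting $\gam=\gam'+\gam''$ into its $(1,0)$ and $(0,1)$ parts (so $\gam''=-\ol{\gam'}^{\,t}$ by anti-Hermiticity) and separating $\Theta^s=0$ by bidegree yields three groups of equations: the $(2,0)$-part $(D\gam)^{2,0}+\tfrac{s}{2}\gam'\wedge\gam'=0$, its $(0,2)$-conjugate, and the $(1,1)$-part, which expresses $\Theta$ in terms of $\gam$, $\gam''$ and the mixed derivatives of the torsion.

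The decisive simplification is special to $n=2$: the tensor $T^i_{jk}$ is antisymmetric in $j,k\in\{1,2\}$, so it is carried entirely by the two numbers $T^1_{12},T^2_{12}$, and the trace map to the Lee $(1,0)$-form $\beta_k=\sum_i T^i_{ik}$ is a linear isomorphism. In other words, on a surface the Chern torsion has no trace-free part and is equivalent to the single $(1,0)$-form $\beta$, with $g$ K\"ahler if and only if $\beta\equiv 0$. Rotating the frame so that $\beta$ is aligned with $\vphi_2$ at a point, the flatness equations above become first- and second-order relations for the single function $\lambda=|\beta|$, the quadratic terms $\gam'\wedge\gam'$ and $\gam'\wedge\gam''+\gam''\wedge\gam'$ turning into explicit multiples of $\lambda^2$ with $s$-dependent coefficients.

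I would then contract the $(1,1)$-equation to obtain the two Ricci-type traces of $R^s$ (which vanish), use the $(2,0)$-equation to eliminate the first derivatives of the torsion, and integrate the resulting scalar identity over the compact surface $M$. All divergence terms drop by Stokes, leaving an identity of the schematic form $\int_M\big(A(s)\,|\nabla^c\beta|^2+B(s)\,|\beta|^4\big)\,dV_g=0$. The key point is that, because there is no trace-free torsion in dimension two, the cross terms responsible for the interval restriction $4-2\sqrt3\le s\le 4+2\sqrt3$ in the general-$n$ statement are absent, so $A(s)$ and $B(s)$ share a common sign for every $s\neq 2$; this forces $\beta\equiv 0$ and hence $g$ K\"ahler. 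I expect the main obstacle to be precisely the bookkeeping in this last step: identifying the correct linear combination of the trace identities so that every derivative-of-torsion term assembles into an exact divergence, and then verifying that the exceptional value $s=2$ is exactly where the residual quadratic form degenerates, consistent with the isosceles Hopf surfaces being genuinely Bismut flat. The borderline case $s=0$ is not covered by the division by $s$ above and is instead disposed of by Boothby's theorem together with the fact that the only two-dimensional complex Lie group admitting a cocompact lattice is $\C^2$, so that compact Chern-flat surfaces are flat tori.
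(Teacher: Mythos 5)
Your overall strategy (impose $\Theta^s=0$, split by bidegree, use that in dimension two the torsion is carried entirely by the Gauduchon/Lee form $\eta$, contract and integrate) is the same as the paper's first step, and your disposal of $s=0$ via Boothby is fine. The gap is in the final, decisive claim that the integrated identity has coefficients of a common sign for \emph{every} $s\neq 2$. It does not. The integrated trace of the $(1,1)$-part of the flatness equation is exactly the paper's Lemma \ref{lemma 3-5}, $(8s-s^2-4)\int_M|\eta|^2\,\omega^n=s^2\int_M|T|^2\,\omega^n$, and after substituting the two-dimensional relation $|T|^2=2|\eta|^2$ it becomes $(3s-2)(s-2)\int_{M^2}|T|^2\,\omega^2=0$. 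This is vacuous not only at $s=2$ but also at $s=\frac23$ (the minimal Gauduchon connection), so the argument you sketch cannot close the case $s=\frac23$. Nor does adding a $|\nabla^c\eta|^2$ term help in the way you hope: the $(2,0)$- and mixed Bianchi identities under flatness force all first covariant derivatives of the torsion to be pointwise \emph{quadratic} in the torsion (e.g.\ $a_1=-2s\,ab$, $a_{\overline 2}=2s|a|^2$, etc.), so any identity of the schematic form $\int(A(s)|\nabla\eta|^2+B(s)|\eta|^4)=0$ collapses back to $\int C(s)|\eta|^4=0$, and the coefficient again has a zero in $(0,2)$.

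What is genuinely missing is a separate argument for $s=\frac23$. The paper supplies one: at $s=\frac23$ Lemma \ref{lemma 3-4} gives $\partial\overline{\partial}\omega=0$, and then a Bochner-type computation applied to $f=\log(|\eta|^2+\eps)$ — using the full system of first-order equations for $a=T^1_{12}$, $b=T^2_{12}$ — shows that $2\sqrt{-1}\,\partial\overline{\partial}f\wedge\omega$ equals a strictly positive multiple of $|\eta|^4$ wherever $\eta\neq 0$, while its integral vanishes by Stokes since $\partial\overline{\partial}\omega=0$. That forces $\eta\equiv 0$. Your proposal needs to either identify the exceptional value $s=\frac23$ and add such an argument, or justify by explicit computation why your particular combination of trace identities avoids the degeneration; as written, the assertion that the quadratic form degenerates only at $s=2$ is incorrect.
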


So $M^2$ is either a flat complex torus or a flat hyperelliptic
surface. Note that for $s=0$, the Chern flat case, $g$ is still
K\"ahler since $n=2$, while when $s=2$, $M^2$ can be non-K\"ahler.
Such surfaces are exactly those isosceles Hopf surfaces
(\cite{WYZ}). For $n\geq 3$, we are able to prove the following:

\begin{theorem}\label{intro2}
Let $(M^n,g)$ be a compact Hermitian manifold with a flat
$s$-Gauduchon connection $\nabla^s$. If either $s\geq  4+2\sqrt{3}$,
or $s\leq 4-2\sqrt{3}$ and $s\neq 0$, then $g$ is K\"ahler.
\end{theorem}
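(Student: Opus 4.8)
The plan is to reduce flatness of $\nabla^s$ to a system of tensorial identities on the Chern torsion and Chern curvature, and then to extract from them, via integration over the compact manifold $M$, a single integral identity that becomes sign-definite exactly at the thresholds $s=4\pm2\sqrt3$.

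First I would fix a local unitary frame $\{e_i\}$ with dual coframe $\{\varphi^i\}$ and work with the Chern connection matrix $\theta=\theta^c$ and the Chern torsion components $T^i_{jk}$, so that $g$ is Kähler if and only if $T\equiv0$. Since any two Hermitian connections differ by a tensor, the difference $\gamma:=\theta^b-\theta^c$ is a matrix of $1$-forms whose $(1,0)$- and $(0,1)$-parts are explicit linear expressions in $T$ and $\bar T$, and $\theta^s=\theta^c+\tfrac s2\gamma$. A direct computation of $\Theta^s=d\theta^s-\theta^s\wedge\theta^s$ then exhibits the curvature of $\nabla^s$ as a quadratic polynomial in $s$,
\[
\Theta^s=\Theta^c+\tfrac s2\,D^c\gamma-\tfrac{s^2}{4}\,\gamma\wedge\gamma ,
\]
where $D^c$ denotes the Chern exterior covariant derivative. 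This structural identity drives everything.

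Next I would impose flatness $\Theta^s=0$ and split it by bidegree. Because $\Theta^c$ is of type $(1,1)$, the $(2,0)$-component (and its conjugate) reads $\tfrac s2(D^c\gamma)^{2,0}=\tfrac{s^2}{4}\,\gamma^{1,0}\wedge\gamma^{1,0}$, which expresses the $(1,0)$-covariant derivative of the torsion in terms of $s$ times a quadratic $T\ast T$ term; the $(1,1)$-component expresses the Chern curvature $R^c$ in terms of $s$, the mixed covariant derivatives of $T$, and a quadratic $T\ast\bar T$ term. I would then contract the $(1,1)$-equation with the metric in the two available ways to obtain two scalar relations, each equating a Chern scalar curvature to $\tfrac s2$ times a divergence of torsion plus $\tfrac{s^2}{4}$ times a quadratic torsion density. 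The heart of the argument is the passage to $M$ compact: integrating the two scalar relations and using the divergence theorem for the Chern connection — whose torsion correction precisely produces the squared norm $|\eta|^2$ of the torsion $(1,0)$-form $\eta_k=\sum_iT^i_{ik}$ — together with Gauduchon's identity relating the integrals of the two Chern scalar curvatures to $\int_M(|\eta|^2,|T|^2)$, I would eliminate all curvature and all derivative terms. What remains is a single identity of the form
\[
0=\int_M\Big(a(s)\,|T_0|^2+b(s)\,|\eta|^2\Big)\,dV,
\]
where $T_0$ is the trace-free part of $T$ and $a(s),b(s)$ are explicit quadratics in $s$. The pointwise quadratic form $Q_s$ in the torsion is positive definite exactly when $s^2-8s+4\ge0$, i.e. when $s\le4-2\sqrt3$ or $s\ge4+2\sqrt3$, since $4\pm2\sqrt3=(\sqrt3\pm1)^2$ are the roots of $s^2-8s+4$. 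Positivity forces $T\equiv0$, hence $g$ is Kähler. The exclusion $s\neq0$ is natural, since at $s=0$ the binding coefficient degenerates and Chern-flat metrics need not be Kähler.

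I expect the main obstacle to be the sharp bookkeeping in the compact step: carefully separating the trace part $\eta$ from the trace-free part $T_0$ of the torsion, tracking the exact torsion corrections in the integration by parts, and invoking the Gauduchon scalar-curvature identity with the correct constants so that the binding coefficient comes out proportional to $s^2-8s+4$. A secondary subtlety is the behaviour at the two endpoints $s=4\pm2\sqrt3$, where $Q_s$ is only positive semidefinite; there one must use the $(2,0)$-equation, which ties $\nabla^cT$ back to $T\ast T$, to rule out a nontrivial kernel and still conclude $T\equiv0$.
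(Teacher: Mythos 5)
Your strategy coincides with the paper's at the structural level: use flatness plus compactness to produce a single integral identity of the form $\int_M\big(a(s)\,|T|^2+b(s)\,|\eta|^2\big)\,\omega^n=0$ and then read off K\"ahlerness from sign-definiteness outside $[4-2\sqrt3,\,4+2\sqrt3]$. The execution differs in two ways worth noting. First, the paper does not stay in the Chern gauge: it picks a $\nabla^s$-parallel unitary frame (available precisely because $\nabla^s$ is flat), so that $\theta=-s\gamma$ and the structure equations collapse to algebraic and first-order identities on $T$ alone; the compactness step is then simply $\int_M\partial\overline{\partial}\,\omega^{n-1}=0$ applied to the identity $(2s-1)\,n\sqrt{-1}\,\partial\overline{\partial}\omega^{n-1}=\big[(8s-s^2-4)|\eta|^2-s^2|T|^2\big]\omega^n$, rather than a contraction of the $(1,1)$-curvature equation combined with Gauduchon-type scalar curvature identities. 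Your route should also work, but it is heavier bookkeeping for the same payoff. Second, and this is the one genuine gap: the entire quantitative content of the theorem lives in the coefficients $a(s),b(s)$, and your proposal asserts rather than derives that the binding quadratic is $s^2-8s+4$; nothing in the outline pins down these constants, so as written the proof is a plausible plan whose decisive computation is missing. Finally, your worry about the endpoints $s=4\pm2\sqrt3$ is unnecessary once the identity is in hand: in the paper's form it reads $(8s-s^2-4)\int_M|\eta|^2\,\omega^n=s^2\int_M|T|^2\,\omega^n$, and at the endpoints the left side vanishes while the coefficient $s^2$ on the right is strictly positive, so $T\equiv0$ follows immediately with no need to invoke the $(2,0)$-equation or analyze a kernel.
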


In other words, if $(M^n,g)$ is compact, $s$-Gauduchon flat with
$s\neq 0$, and is non-K\"ahler, then $s$ must lie in the interval
$(4-2\sqrt{3}, 4+2\sqrt{3})$. Note however that this interval contains
the interesting cases $s=2$ (Bismut), $s=1$, and $s=\frac{2}{3}$ (see below).

\vsv

If we take the dimension into account, then the two constants $4\pm
2\sqrt{3}$ can be slightly improved. For $n\geq 3$, let us denote by
$$ a_n^{\pm} = \frac{1}{n} \big[ 4(n-1) \pm 2 \sqrt{ 3n^2-7n+4} \big] ,
\ \ \ b_n^{\pm} = \frac{1}{(n+1)} \big[ 4(n-1) \pm 2 \sqrt{
3n^2-8n+5} \big]$$ Then we have $a_n^-<b_n^- <0.6$ and both
sequences $(a_n^-)$ and $(b_n^-)$ are monotonically decreasing and
approaching $4-2\sqrt{3}$ when $n\rightarrow \infty $. Similarly,
$3.4<b_n^+ < a_n^+$, and both sequences $(a_n^+)$ and $(b_n^+)$ are
monotonically increasing and approaching $4+2\sqrt{3}$ when
$n\rightarrow \infty $. The statement of Theorem \ref{intro2} can be
slightly improved by

\begin{theorem}\label{intro3}
Let $(M^n,g)$ be a compact Hermitian manifold with a flat
$s$-Gauduchon connection $\nabla^s$. If either $s>a_n^+ $, or $s<
a_n^-$ and $s\neq 0$, then $g$ is K\"ahler.
\end{theorem}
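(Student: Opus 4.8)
The plan is to expand the curvature of $\nabla^s$ about the Chern connection, read off the flatness equations bidegree by bidegree, and then contract and integrate over the compact manifold to reach an integral inequality whose integrand is an explicit quadratic in $s$ applied to the Chern torsion. Write $\nabla^s=\nabla^c+\tfrac{s}{2}\gamma$, where $\gamma=\nabla^b-\nabla^c$ is the (tensorial) difference of the Bismut and Chern connections; as in \cite{Gauduchon2} this $\gamma$ is built algebraically from the Chern torsion $T$ (components $T^i_{jk}$) and its conjugate $\ol T$. Since, schematically, the curvature of $\nabla^c+A$ is $R^c+\nabla^cA+A\wedge A$, one gets
\[
R^s=R^c+\tfrac{s}{2}\,\nabla^c\gamma+\tfrac{s^2}{4}\,\gamma\wedge\gamma ,
\]
a $(1,1)$-form $R^c$ plus correction terms linear in $\nabla^cT$ and quadratic in $T,\ol T$.

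Setting $R^s=0$ and separating into bidegrees $(2,0)$, $(1,1)$, $(0,2)$ leaves two independent equations, the $(0,2)$ one being conjugate to the $(2,0)$ one. The $(2,0)$-equation is first order and, since $R^c$ is purely of type $(1,1)$, reads schematically $\tfrac{s}{2}\nabla^{1,0}T+\tfrac{s^2}{4}\,T\ast T=0$; dividing by $s$ — which is precisely where the hypothesis $s\neq0$ is genuinely used — lets me solve for the holomorphic covariant derivative $\nabla^{1,0}T$ algebraically in terms of $T\ast T$. The $(1,1)$-equation then expresses the Chern curvature $R^c$ in terms of the remaining antiholomorphic derivatives of $T$ and the quadratic terms $T\ast\ol T$.

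Next I would contract the $(1,1)$-equation with the metric in both the form and the endomorphism slots to get a scalar equation, substitute the $(2,0)$-relation to remove the holomorphic derivatives, and integrate over $M$. On the compact manifold, the surviving first-order terms and the divergence coming from the trace of $R^c$ (via the Bianchi identity) integrate to zero by Stokes, after which a Cauchy--Schwarz estimate of the remaining cross terms — performed separately on the trace-free part $T^0$ of the torsion and on its trace $\tau$, the $(1,0)$-form $\tau_k=\sum_i T^i_{ik}$ — should yield
\[
\int_M\Big[\,P_n(s)\,|T^0|^2+Q_n(s)\,|\tau|^2\,\Big]\,dV\ \le\ 0 ,
\]
where $P_n(s)=ns^2-8(n-1)s+4(n-1)$ and $Q_n(s)=(n+1)s^2-8(n-1)s+4(n-1)$. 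The roots of $P_n$ and $Q_n$ are exactly $a_n^{\pm}$ and $b_n^{\pm}$, and since $Q_n-P_n=s^2\ge0$ the binding condition is positivity of $P_n$. Hence if $s>a_n^{+}$ or $s<a_n^{-}$ (and $s\neq0$), both coefficients are strictly positive, forcing $T^0\equiv0$ and $\tau\equiv0$, so $T\equiv0$ and $g$ is K\"ahler; inside $(a_n^-,a_n^+)$ — which contains $s=2$ — the coefficients fail to be of one sign, so the inequality is vacuous, consistent with the non-K\"ahler Bismut-flat examples of \cite{WYZ}. The dimension-free constants $4\pm2\sqrt3$ of Theorem~\ref{intro2} are recovered as the limits of $a_n^{\pm}$ as $n\to\infty$.

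The main obstacle is the middle step: organizing the curvature/torsion bookkeeping so that the coefficients come out as the precise quadratics above. Two points require care. First, the integration by parts must be arranged so that the Chern-scalar-curvature contribution and all first-order terms cancel exactly; since $g$ is not assumed Gauduchon, one must check that the relevant combination is a genuine divergence (produced by the Bianchi identity together with the $(2,0)$-relation) rather than a term carrying metric-dependent information. Second, the two Cauchy--Schwarz estimates must be sharp — and in particular must respect the splitting of $T$ into its trace and trace-free parts — in order to produce the discriminants $3n^2-7n+4$ and $3n^2-8n+5$ rather than weaker, dimension-free bounds; any slack there would enlarge the excluded interval and weaken Theorem~\ref{intro3} back toward Theorem~\ref{intro2}.
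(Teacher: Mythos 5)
Your overall strategy---use flatness to express derivatives of the torsion in terms of quadratic torsion expressions, contract, integrate over the compact manifold, and conclude from positivity of a quadratic in $s$---is indeed the spirit of the paper's argument. But as written the proposal has a genuine gap: the entire quantitative content is deferred to the ``middle step'' that you yourself flag as the main obstacle, and the target inequality you aim for is not the one that the computation actually produces. The paper works in a $\nabla^s$-parallel unitary frame (which exists precisely because $\nabla^s$ is flat), specializes the structure equations to $\theta=-s\gamma$, and contracts the $(2,1)$-part of the first Bianchi identity to get the scalar identity $(2s-1)\chi=\tfrac{s^2}{2}|T|^2+s(1-\tfrac32 s)|\eta|^2$ with $\chi=\sum_i\eta_{i,\bar i}$ (Lemma \ref{lemma 3-3}); the reality of $\chi$ is what extracts this from a complex equation. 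The divergence term is then identified as exactly $(2s-1)\,n\sqrt{-1}\,\partial\overline\partial\omega^{n-1}=[(8s-s^2-4)|\eta|^2-s^2|T|^2]\,\omega^n$, which integrates to zero by Stokes on any compact manifold---so your worry about whether the first-order terms form ``a genuine divergence'' absent a Gauduchon hypothesis is real but is resolved by this specific identification, which your sketch does not supply. There is no Chern scalar curvature to cancel and no Cauchy--Schwarz on cross terms; the only inequality used is the elementary pointwise bound $|\eta|^2\le(n-1)|T|^2$ applied to the exact identity $(8s-s^2-4)\int|\eta|^2\omega^n=s^2\int|T|^2\omega^n$, which is where $F(s)=P_n(s)=ns^2-8(n-1)s+4(n-1)$ enters.

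Concretely, your claimed inequality $\int[P_n(s)|T^0|^2+Q_n(s)|\tau|^2]\le 0$ has the wrong coefficient structure. Using the orthogonal splitting $|T|^2=|T^0|^2+\tfrac{2}{n-1}|\eta|^2$, the paper's exact identity becomes
\begin{equation*}
\int_M\Big[\,s^2\,|T^0|^2+\tfrac{1}{n-1}Q_n(s)\,|\eta|^2\,\Big]\,\omega^n=0,
\end{equation*}
i.e.\ the trace-free part carries the coefficient $s^2$, not $P_n(s)$; the polynomial $P_n$ appears only after discarding the sharp decomposition in favor of $|\eta|^2\le(n-1)|T|^2$. Your inequality does not follow from this identity (for $s$ just outside $[a_n^-,a_n^+]$ the difference of the two left-hand sides has indefinite sign), and no derivation of it is given. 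Your final logical step (``$Q_n-P_n=s^2\ge 0$ so positivity of $P_n$ suffices'') would be fine if the inequality were established, but it is not. To repair the proposal you would need to actually carry out the Bianchi-identity contraction---Lemmas \ref{lemma 3-1} and \ref{lemma 3-3} of the paper---and identify the divergence term as $\partial\overline\partial\omega^{n-1}$; once that is done, the passage to $a_n^{\pm}$ is a one-line Cauchy--Schwarz, not a delicate sharp estimate.
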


\vsv

When the metric $g$ is locally conformally K\"ahler, its torsion
tensor takes a simple form, so the above type consideration leads to
the conclusion that $g$ will be K\"ahler for all values of $s$ except
possibly when $s=b_n^+$ or $s=b_n^-$. That is, we have

\begin{theorem}\label{intro4}
Let $(M^n,g)$ be a compact Hermitian manifold with a flat
$s$-Gauduchon connection $\nabla^s$. Assume $n\geq 3$ and $g$ is
locally conformal K\"ahler. If $s\neq b_n^{\pm}$, then $g$ is
K\"ahler.
\end{theorem}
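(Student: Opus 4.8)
The plan is to re-run the trace-and-integrate argument that underlies Theorem \ref{intro3}, but now exploiting that for a locally conformal K\"ahler metric the Chern torsion is ``pure trace''. Recall that for $n\geq 3$ a Hermitian metric $g$ is locally conformal K\"ahler precisely when, in a local unitary frame $\{e_i\}$, its Chern torsion has the form
\begin{equation}
T^k_{ij} = \tfrac{1}{n-1}\big(\delta^k_i\,\eta_j - \delta^k_j\,\eta_i\big),
\end{equation}
where $\eta_i = \sum_j T^j_{ij}$ is a multiple of the Lee form; equivalently, the $U(n)$-traceless part $T^\circ$ of the torsion vanishes identically. A direct contraction gives $|T|^2 = \tfrac{2}{n-1}|\eta|^2$, so the whole torsion is measured by the single scalar $|\eta|^2$. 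This is the ``simple form'' referred to in the statement, and $n\geq 3$ is exactly the range in which this characterization is available.

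First I would write the curvature of $\nabla^s = \nabla^c + \tfrac{s}{2}\gamma$, with $\gamma = \nabla^b-\nabla^c$, as $R^s = R^c + \tfrac{s}{2}\,d^{\nabla^c}\gamma + \tfrac{s^2}{4}\,\gamma\wedge\gamma$, impose flatness $R^s=0$, and take the same two traces used to prove Theorem \ref{intro3}. Combining them so as to cancel the Chern curvature $R^c$ and the first-order terms $d^{\nabla^c}\gamma$, and then integrating over the compact manifold $M$ so that the remaining divergence terms drop out by Stokes' theorem, should produce an identity of the shape
\begin{equation}
\int_M \big[\, P_\tau(s)\,|\eta|^2 + P_\circ(s)\,|T^\circ|^2\,\big]\, dV = 0,
\end{equation}
where $P_\tau$ and $P_\circ$ are explicit quadratic polynomials in $s$. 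By Schur's lemma there is no cross term: the trace part $\eta$ and the traceless part $T^\circ$ lie in inequivalent $U(n)$-representations, so every $U(n)$-invariant quadratic form on the torsion is diagonal with respect to this splitting. In the general theorem the binding constraint comes from the traceless coefficient $P_\circ(s)$, which is proportional to $ns^2-8(n-1)s+4(n-1)$ with roots $a_n^{\pm}$, and one needs both $P_\tau(s)>0$ and $P_\circ(s)>0$ to force $g$ K\"ahler, hence the interval $(a_n^-,a_n^+)$.

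Now I would impose the locally conformal K\"ahler hypothesis, i.e. $T^\circ\equiv 0$ by the pure-trace form of the torsion. The identity collapses to $P_\tau(s)\int_M |\eta|^2\,dV = 0$. Since $|\eta|^2\geq 0$ pointwise, we no longer need the full positive-definiteness responsible for the interval in Theorem \ref{intro3}; it suffices that the single coefficient $P_\tau(s)$ be nonzero. An explicit evaluation of the trace-part coefficient should give $P_\tau(s)$ proportional, with positive constant, to $(n+1)s^2-8(n-1)s+4(n-1)$, whose two roots are exactly $b_n^{\pm}$. Hence for $s\neq b_n^{\pm}$ we get $\int_M|\eta|^2\,dV=0$, so $\eta\equiv 0$, and therefore $T\equiv 0$; thus $g$ is K\"ahler.

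The main obstacle is the honest computation of $P_\tau(s)$ and the verification that its roots coincide with $b_n^{\pm}$: one must track, through the quadratic term $\gamma\wedge\gamma$ and the first-order term $d^{\nabla^c}\gamma$, how the pure-trace substitution redistributes coefficients, and confirm that its net effect is to replace the leading factor $n$ in the general quadratic $ns^2-8(n-1)s+4(n-1)$ (roots $a_n^{\pm}$) by $n+1$ (roots $b_n^{\pm}$); as a consistency check, $P_\tau(2)=-8(n-2)\neq 0$ for $n\geq 3$, so even $s=2$ yields a K\"ahler metric in this class. Care is also needed to certify that every first-order contribution is a genuine divergence, so that it integrates to zero on compact $M$; here one may use that the Lee form is closed ($d\theta=0$) in the locally conformal K\"ahler case and, if convenient, pass to the Gauduchon representative in the conformal class. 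Once the coefficient is pinned down the rest is immediate, precisely because the reduced identity is rank one and only requires $P_\tau(s)\neq 0$ rather than a sign.
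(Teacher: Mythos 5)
Your proposal follows essentially the same route as the paper: local conformal K\"ahlerness forces the pure-trace torsion form and hence $|T|^2=\frac{2}{n-1}|\eta|^2$ (the paper's Lemma \ref{lemma 2-2}), which, substituted into the integrated flatness identity $(8s-s^2-4)\int_M|\eta|^2\,\omega^n=s^2\int_M|T|^2\,\omega^n$ (Lemma \ref{lemma 3-5}), gives $\big[8(n-1)s-4(n-1)-(n+1)s^2\big]\int_M|T|^2\,\omega^n=0$, whose coefficient has roots exactly $b_n^{\pm}$. The only caveats are that you assert rather than derive the integral identity --- that derivation (Lemmas \ref{lemma 3-1}--\ref{lemma 3-5}, via the Bianchi identities in a $\nabla^s$-parallel frame and the formula for $\partial\overline{\partial}\omega^{n-1}$) is where the real work lies --- and that your side remark attributing $a_n^{\pm}$ to a ``traceless coefficient'' $P_\circ$ is not how the paper obtains it: there $a_n^{\pm}$ comes from applying the Cauchy--Schwarz bound $|\eta|^2\le(n-1)|T|^2$ to the same single identity, and in the orthogonal splitting the coefficient of $|T^\circ|^2$ is simply $-s^2$.
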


\vsv

The main idea of proving Theorem \ref{intro1} is:  when $n=2$, the
torsion $1$-form $\eta$ contains all the information about the
torsion tensor. So the above type of consideration in Theorem
\ref{intro4} can be pushed further to lead to the K\"ahlerness of
$g$ for all $s$ values (other than $0$ and $2$) except one value:
$s=\frac{2}{3}$. By a Bochner type argument, one can conclude the
K\"ahlerness in the case $s=\frac{2}{3}$ as well, thus proving
Theorem \ref{intro1}.

\vsv

Note that in all dimensions, the connection $\nabla^{\frac{2}{3}}$
distinguishes itself with the property that it has the smallest total
torsion amongst all Gauduchon connections. For that reason, we will
also call the $\frac{2}{3}$-Gauduchon connection $\nabla^{\frac{2}{3}}$
the {\em minimal Gauduchon connection.}

\vsv

Recall that a Hermitian metric $g$ is called {\em balanced,} if
$d \omega^{n-1} =0$, where $\omega$ is the K\"ahler form of $g$.
$g$ is said to be {\em Gauduchon} if
$\partial \overline{\partial} \omega^{n-1} =0$.  A local property
about Gauduchon flat manifolds worth mentioning is the following:

\begin{proposition}\label{intro5}
Let $(M^n,g)$ be a Hermitian manifold with a flat $s$-Gauduchon
connection. If $s=\frac{1}{2}$, then $g$ is K\"ahler. Also, if
$s\neq 0$ and $g$ is balanced, then $g$ is K\"ahler.
\end{proposition}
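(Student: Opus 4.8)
The plan is to reduce the flatness condition $R^s=0$ to a pointwise algebraic identity in the Chern torsion, and then to read off K\"ahlerness from the sign of the resulting quadratic form. I would work in a local unitary frame $\{e_i\}$ with Chern torsion components $T^i_{jk}=-T^i_{kj}$ and torsion (Lee) $1$-form $\eta_k=\sum_i T^i_{ik}$; recall that $g$ is K\"ahler iff $T\equiv 0$, and that $g$ is balanced iff $\eta\equiv 0$. Writing $\nabla^s=\nabla^c+\frac{s}{2}\gamma$ with $\gamma=\nabla^b-\nabla^c$ the torsion-difference tensor (whose components are explicit and linear in $T,\overline T$), I would expand
$$R^s=R^c+\frac{s}{2}\,d^{\nabla^c}\gamma+\frac{s^2}{4}\,[\gamma\wedge\gamma],$$
so that $R^s$ equals the (pure type $(1,1)$) Chern curvature plus a term linear in the Chern-covariant derivative of $T$ with coefficient proportional to $s$, plus a term quadratic in $T$ with coefficient proportional to $s^2$.

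Next I would decompose the flatness equation $R^s=0$ according to bidegree. Since $R^c$ is of pure type $(1,1)$, the $(2,0)$- and $(0,2)$-parts of $R^s=0$ involve only the Gauduchon correction; these equations express the antisymmetrized first covariant derivatives of $T$ as quadratic expressions in $T$ (times a function of $s$). Substituting these back eliminates every first-derivative-of-torsion term, which is precisely what makes the conclusion local rather than requiring integration over a compact $M$. It then remains to treat the $(1,1)$-part. Here I would invoke the first Bianchi identity of the Chern connection, which measures the failure of the K\"ahler symmetry $R^c_{i\bar j k\bar l}=R^c_{k\bar j i\bar l}$ through covariant derivatives and quadratic expressions in $T$; combining it with two distinct Ricci-type contractions of the $(1,1)$-flatness cancels $R^c$ entirely and, after the substitution above, leaves a purely algebraic pointwise identity of the form
$$P(s)\,|\mathring T|^2+Q(s)\,|\eta|^2=0,$$
where $\mathring T$ denotes the trace-free part of $T$ and $P,Q$ are explicit rational functions of $s$.

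The two conclusions then follow by examining $P$ and $Q$. For the first statement I would verify that $P(\tfrac12)$ and $Q(\tfrac12)$ are nonzero and of the same sign, so that the identity forces $\mathring T=0$ and $\eta=0$, hence $T=0$ and $g$ K\"ahler; the absence of any integration is what yields the local statement. For the balanced case I set $\eta\equiv 0$ from the start, which also kills the trace parts of $\gamma$ and simplifies the expansion; the identity then reduces to $P(s)\,|\mathring T|^2=0$, and since $P(s)\neq 0$ for every $s\neq 0$ one concludes $\mathring T=0$, hence $T=0$ and $g$ K\"ahler.

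The main obstacle is the bookkeeping in the second paragraph: correctly expanding $d^{\nabla^c}\gamma$ and $[\gamma\wedge\gamma]$ in components, verifying that the $(2,0)/(0,2)$-parts really do eliminate all first derivatives of $T$ (so that no divergence term survives and the argument stays local), and computing the coefficient functions $P(s),Q(s)$ precisely enough to check the sign condition at $s=\tfrac12$ and the nonvanishing $P(s)\neq 0$ for $s\neq 0$. The value $s=\tfrac12$ should be exactly the place where the contracted quadratic form becomes definite without any compactness hypothesis, which is the crux that distinguishes this proposition from the integral arguments behind Theorems \ref{intro2}--\ref{intro4}.
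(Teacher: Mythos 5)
Your conclusions are correct and your overall strategy (decompose $R^s=0$ by bidegree, feed it into the Bianchi identity, and contract) is the same as the paper's, but the pivotal claim in your second paragraph is not right: the substitution does \emph{not} eliminate every first-derivative-of-torsion term, and the resulting contracted identity is not purely algebraic for general $s$. The $(2,0)$-part of flatness only controls the antisymmetrized $(1,0)$-derivatives $T^\ell_{ij,k}-T^\ell_{ik,j}$, whereas the $(1,1)$-part involves the $(0,1)$-derivatives $T^k_{ij,\bar\ell}$, which that substitution never touches. After taking the natural trace, what survives is exactly the divergence term $(2s-1)\chi$ with $\chi=\sum_i \eta_{i,\bar i}$; this is the paper's Lemma \ref{lemma 3-3}, $(2s-1)\chi=\tfrac{s^2}{2}|T|^2+s(1-\tfrac{3}{2}s)|\eta|^2$. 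For generic $s$ this equation merely \emph{determines} $\chi$ and forces nothing about $T$ --- as it must, since non-K\"ahler Bismut-flat ($s=2$) examples exist. Indeed, a universal pointwise identity $P(s)|\mathring T|^2+Q(s)|\eta|^2=0$ with $P(s)\neq 0$ for all $s\neq 0$ would make $|T|^2$ a fixed multiple of $|\eta|^2$ pointwise on every $s$-flat manifold of a given dimension, which already fails at $s=2$, $n=3$ (compare $S^3\times S^3$ with the product of a Hopf surface and an elliptic curve). So the identity you propose to establish is false as stated, and verifying ``$P(\tfrac12),Q(\tfrac12)$ of the same sign'' and ``$P(s)\neq 0$ for $s\neq 0$'' is not a route that can be completed.

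The correct mechanism --- and the actual content of the proposition --- is that the surviving derivative term is a derivative of the \emph{trace} $\eta$ only, and the two hypotheses are precisely the two ways to kill it: at $s=\tfrac12$ its coefficient $2s-1$ vanishes, and the right-hand side becomes $\tfrac18\bigl(|T|^2+|\eta|^2\bigr)$, which is positive definite, forcing $T=0$; in the balanced case $\eta\equiv 0$ gives $\chi\equiv 0$ and annihilates the $|\eta|^2$ term, leaving $\tfrac{s^2}{2}|T|^2=0$, hence $T=0$ for $s\neq 0$. If you carry out your bookkeeping honestly you will be led to exactly this identity with the residual $\chi$ term; you should then recognize that locality holds not because all derivatives cancel for every $s$, but because they cancel precisely under the two hypotheses of the proposition. (For all other $s$ one must integrate $\chi$ against $\omega^n$ over a compact $M$, which is how the paper obtains Theorems \ref{intro2}--\ref{intro4}.)
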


\vsv Let us remark that a noncompact version of Conjecture
\ref{YZconj} is much more subtle. If we focus on the Chern flat case
($s=0$), there are example of noncompact Hermitian surfaces
(incomplete ones in \cite{Boothby} and complete ones in \cite{WYZ})
without parallel Chern torsion, hence they do not come from
quotients of complex Lie groups with left invariant metrics. On the
other hand, noncompact (not necessarily complete) Bismut flat
($s=2$) Hermitian manifolds have been classified in \cite{WYZ}. It
is an interesting question if such a difference also exists for
noncompact $s$-Gauduchon flat Hermitian manifolds when $s$ is other
than $0$ and $2$.

\vsv

The paper is organized as follows. In \S 2, we collect some known
results and fix the notations. In \S 3, we give proofs to Theorem
\ref{intro2} through Proposition \ref{intro5}. In \S 4, we prove
Theorem \ref{intro1}.

\vsv

\vsv

\vsv

\vs

\section{Preliminaries}

\vsv

We begin with a  Hermitian manifold $(M^n,g)$. We will follow the
notations of \cite{YZ} for the most part. Denote by $\nabla$,
$\nabla^c$ the Riemannian (aka Levi-Civita) and the Chern
connection, respectively. Denote by $R$, $R^c$ the curvature tensors
of these two connections, and by $T^c$ the torsion tensor of
$\nabla^c$. Under a local unitary frame $\{ e_1, \ldots , e_n\} $ of
type $(1,0)$ tangent vectors, $T^c$ has components
$$ T^c(e_i, e_j) = \sum_{k=1}^n 2\ T_{ij}^k e_k,
\ \ \ \ \  \ T^c(e_i, \overline{e_j}) =0. \ \ \ \ \ $$

Note the coefficient $2$ above, which is unconventional but makes
some of the subsequent formula simpler. We will write $e= \ ^t(e_1,
\ldots , e_n)$ as a column vector. Write $\varphi = \
^t\!(\varphi_1, \ldots , \varphi_n)$ the column vector of local
$(1,0)$-forms that are dual to $e$. As in \cite{YZ}, let us write
$$ \nabla^c e = \theta e, \ \ \nabla e =
\theta_1 e + \overline{\theta_2} \overline{e}, $$ so $\theta$ and
$\Theta = d\theta - \theta \wedge \theta $ are the matrices of
connection and curvature of $\nabla^c$ under the unitary frame $e$,
while $\hat{\theta}$ and $\hat{\Theta } = d \hat{\theta} -
\hat{\theta} \wedge \hat{\theta} $ are the matrices of connection
and curvature of $\nabla$ under the frame $\{ e, \overline{e}\}$,
with
$$ \hat{\theta } = \left[ \begin{array}{ll} \theta_1 & \overline{\theta_2 } \\
\theta_2 & \overline{\theta_1 }  \end{array} \right] , \ \  \ \ \
\hat{\Theta } = \left[ \begin{array}{ll} \Theta_1 & \overline{\Theta}_2  \\
\Theta_2 & \overline{\Theta}_1   \end{array} \right]. $$

The structure equations are:
\begin{eqnarray}
d \varphi & = & - \ ^t\!\theta \wedge \varphi + \tau,  \label{formula 1}\\
d  \theta & = & \theta \wedge \theta + \Theta.
\end{eqnarray}
where $\tau$ is the column vector of the
torsion $2$-forms under the local frame $e$, and
\begin{eqnarray}
d\varphi & = & - \ ^t\! \theta_1 \wedge \varphi - \ ^t\! \theta_2
\wedge \overline{\varphi }\\
\Theta_1 & = & d\theta_1 -\theta_1 \wedge \theta_1 -\overline{\theta_2} \wedge \theta_2, \\
\Theta_2 & = & d\theta_2 - \theta_2 \wedge \theta_1 - \overline{\theta_1 } \wedge \theta_2,  \label{formula 7}.
\end{eqnarray}

The entries of $\tau$ are $(2,0)$ forms, and the entries of $\Theta$
are all $(1,1)$ forms. Taking exterior differentiation of the above
equations, we get the two Bianchi identities:
\begin{eqnarray}
d \tau & = & - \ ^t\!\theta \wedge \tau + \ ^t\!\Theta \wedge \varphi,
\label{formula 3} \\
d  \Theta & = & \theta \wedge \Theta - \Theta \wedge \theta.
\end{eqnarray}

From \cite{YZ}, we know that when $e$ is unitary, we have the following
simple formula
\begin{equation}
\tau_k = \sum_{i,j=1}^n T_{ij}^k \varphi_i\wedge \varphi_j,
\ \ (\theta_2)_{ij} = \sum_{k=1}^n \overline{T^k_{ij}} \varphi_k ,
\ \ \gamma'_{ij} = \sum_{k=1}^n T^j_{ik} \varphi_k
\end{equation}
where $\gamma'$ is the $(1,0)$ part of $\gamma = \theta_1-\theta =
\gamma ' - \gamma '^{\ast }$. We will also denote by $\eta =
\mbox{tr}(\gamma')$ the torsion $1$-form, aka Gauduchon $1$-form
(\cite{Gauduchon}), so we have
\begin{eqnarray}
 & & \eta =  \sum_{i=1}^n \eta_i \varphi_i =
 \sum_{i,k=1}^n T^k_{ki} \varphi_i \\
& & \partial \omega^{n-1} = -2 \ \eta \wedge \omega^{n-1}
\end{eqnarray}
where $\omega = \sqrt{-1} \ ^t\!\varphi \wedge \overline{\varphi}$
is the K\"ahler $(1,1)$-form. From the last equation above, we get
\begin{equation}
\partial \overline{\partial } \omega^{n-1} =
2\ (\overline{\partial } \eta + 2\eta \wedge \overline{\eta })
\wedge \omega^{n-1}
\end{equation}

By Lemma 2 of \cite{WYZ}, we know that the matrix of connection for
$\nabla^b$ under $e$ is given by $\theta + 2\gamma$, therefore we obtain
the following:

\begin{lemma}\label{lemma 2-1}
Given a Hermitian manifold $(M^n,g)$, the matrix of connection forms
for the $s$-Gauduchon connection under the frame $e$ is given by
\begin{equation}
\theta^s = \theta + s \gamma
\end{equation}
\end{lemma}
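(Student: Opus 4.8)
The plan is to exploit the affine structure of the space of linear connections on the holomorphic tangent bundle. The difference of any two linear connections is a tensor, so for real coefficients summing to $1$ the corresponding combination of connections is again a linear connection; moreover, under a fixed local frame its connection matrix equals the same affine combination of the individual connection matrices. Since $\nabla^s = (1-\frac{s}{2})\nabla^c + \frac{s}{2}\nabla^b$ by definition, and the coefficients $1-\frac{s}{2}$ and $\frac{s}{2}$ sum to $1$, this principle applies verbatim.

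First I would record the two endpoint connection matrices relative to the fixed unitary frame $e$. By definition of the Chern connection, $\nabla^c e = \theta e$. For the Bismut connection I would invoke Lemma 2 of \cite{WYZ}, quoted just above the statement, which gives that the connection matrix of $\nabla^b$ under $e$ is $\theta + 2\gamma$, i.e. $\nabla^b e = (\theta + 2\gamma)e$. It matters that both matrices are expressed with respect to the same frame $e$, so that their difference $2\gamma$ is precisely the tensorial object relating the two connections, and so that the affine-combination principle may be applied entrywise.

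The conclusion then follows from a one-line computation:
\[ \nabla^s e = \left(1-\frac{s}{2}\right)\theta e + \frac{s}{2}\left(\theta + 2\gamma\right)e = \left(\theta + s\gamma\right)e, \]
whence $\theta^s = \theta + s\gamma$, as asserted. There is no serious obstacle here: essentially all of the substantive content is imported from the Bismut connection formula of \cite{WYZ}, and the only point requiring care is the bookkeeping verification that the coefficients sum to $1$, which is exactly what guarantees that $\theta + s\gamma$ transforms as a genuine connection matrix rather than as an arbitrary matrix of $1$-forms.
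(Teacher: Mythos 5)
Your proof is correct and matches the paper's own derivation: the paper likewise cites Lemma 2 of \cite{WYZ} for the Bismut connection matrix $\theta+2\gamma$ and obtains $\theta^s=\theta+s\gamma$ as the affine combination $(1-\frac{s}{2})\theta+\frac{s}{2}(\theta+2\gamma)$. Your additional remark about why the affine combination of connection matrices is again a connection matrix is a harmless elaboration of the same one-line computation.
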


\vsv

Next let us recall the conformal change formula. Let $\tilde{g}=e^{2u}g$ be
 a metric  conformal to $g$, where $u$ is a smooth real valued function.
 Locally we can take $\tilde{\varphi} =e^u\varphi$ and $\tilde{e} = e^{-u}e$,
 so $\tilde{e}$ is a local unitary frame for $(M^n,\tilde{g})$, with
 $\tilde{\varphi}$ its dual coframe.

\vsv

Let $\tilde{\theta}$, $\widetilde{\Theta}$, and $\tilde{\tau}$ be
respectively the matrices or column vector of the Chern connection,
curvature, and torsion for the metric $\tilde{g}$ under the unitary
frame $\tilde{e}$. From \S 5 of \cite{YZ}, we have the following:
$$ \tilde{\theta} = \theta + (\partial u - \overline{\partial }u)I ,
\ \ \ \widetilde{\Theta} = \Theta - 2 \partial  \overline{\partial }u I, $$
and
$$ \tilde{\tau } = e^u (\tau + 2 \ \partial u \wedge \varphi ). $$
This leads to the following
\begin{equation}
\widetilde{T^k_{ij}} = e^{-u}
\big[  T^k_{ij} + u_i \delta_{jk} - u_j \delta_{ik}  \big] ,
\end{equation}
where $u_i=e_i(u)$.

\vsv

Next, recall that $\eta_i = \sum_k T^k_{ki}$, and let us denote by
$$ |\eta|^2 = \sum_{i=1}^n |\eta_i|^2,  \ \ \  |T|^2 =
\sum_{i,j,k=1}^n |T^i_{jk}|^2 .$$
Both are independent of the choice of unitary frames thus are
well defined global functions on the manifold $M^n$.

\vsv

If $\tilde{g}$ is K\"ahler, namely, if $\tilde{\tau}=0$, then we have
$$ T^k_{ij} = u_j\delta_{ik} - u_i\delta_{jk} $$
for any indices $i$, $j$, $k$. This leads to the following:

\begin{lemma} \label{lemma 2-2}
If $(M^n,g)$ is a Hermitian manifold that is locally conformally
K\"ahler, then it holds $$|T|^2 = \frac{2}{n-1}|\eta |^2.$$
\end{lemma}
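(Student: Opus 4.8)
The plan is to reduce everything to the pointwise formula for the torsion that the locally conformally K\"ahler (LCK) hypothesis forces, and then carry out two short index computations. Since LCK is a local condition and both $|T|^2$ and $|\eta|^2$ are well-defined global functions independent of the unitary frame, it suffices to verify the claimed identity at each point using any local unitary frame. Around a given point I would invoke the LCK assumption to write $\tilde{g} = e^{2u}g$ with $\tilde{g}$ K\"ahler, so $\tilde{\tau}=0$; feeding this into the conformal change formula $\widetilde{T^k_{ij}} = e^{-u}[T^k_{ij} + u_i\delta_{jk} - u_j\delta_{ik}]$ displayed above gives the key relation
$$ T^k_{ij} = u_j\delta_{ik} - u_i\delta_{jk}, \qquad u_i = e_i(u), $$
which is the common starting point for both computations.

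First I would compute the torsion $1$-form. Substituting into $\eta_i = \sum_{k=1}^n T^k_{ki}$ and using $\delta_{kk}=1$ yields $T^k_{ki} = u_i - u_k\delta_{ik}$, hence $\eta_i = nu_i - u_i = (n-1)u_i$. Therefore $|\eta|^2 = \sum_i|\eta_i|^2 = (n-1)^2\sum_{i}|u_i|^2$.

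Next I would compute $|T|^2 = \sum_{i,j,k}|T^k_{ij}|^2$ by expanding the square of $u_j\delta_{ik} - u_i\delta_{jk}$. The two diagonal terms $|u_j|^2\delta_{ik}$ and $|u_i|^2\delta_{jk}$ each contribute $n\sum_i|u_i|^2$ after summing over the triple index, while the cross term $u_j\overline{u_i}\,\delta_{ik}\delta_{jk}$ and its conjugate each force $i=j=k$ and so contribute only $\sum_i|u_i|^2$. Collecting the four pieces gives $|T|^2 = 2n\sum_i|u_i|^2 - 2\sum_i|u_i|^2 = 2(n-1)\sum_i|u_i|^2$.

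Finally I would eliminate the common factor $\sum_i|u_i|^2$ between the two displays: from the first, $\sum_i|u_i|^2 = |\eta|^2/(n-1)^2$, and substituting into the second yields $|T|^2 = 2(n-1)\cdot|\eta|^2/(n-1)^2 = \frac{2}{n-1}|\eta|^2$, as claimed. The computation is essentially mechanical; the only point requiring care---which I would flag as the main pitfall rather than a genuine obstacle---is the index bookkeeping in the cross terms of $|T|^2$, where one must remember that $\delta_{ik}\delta_{jk}$ collapses the triple sum all the way to the single constraint $i=j=k$ rather than to a double sum, since getting that wrong changes the final coefficient.
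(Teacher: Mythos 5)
Your proof is correct and follows essentially the same route as the paper: both start from the relation $T^k_{ij} = u_j\delta_{ik} - u_i\delta_{jk}$ forced by the local conformal K\"ahler hypothesis, deduce $\eta_i=(n-1)u_i$, and compute $|T|^2=2(n-1)|du|^2$ (the paper reads off the nonzero components directly rather than expanding the Kronecker deltas, but the calculation is the same). The index bookkeeping in your cross terms is handled correctly, so nothing needs to be fixed.
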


\begin{proof}
From the identity right above the statement of the lemma, we
know that $T^k_{ij}=0$ when $k\notin \{ i,j\}$, and
$T^i_{ij}=u_j$ if $i\neq j$. So $\eta_j = (n-1)u_j$. From this,
we get $|\eta|^2 = (n-1)^2 |du|^2$, and $|T|^2 = 2(n-1)|du|^2$,
where $|du|^2= |u_1|^2 + \cdots + |u_n|^2$. Thus
$|T|^2 = \frac{2}{n-1}|\eta |^2$, and the lemma is proved.
\end{proof}

\vsv

\vsv

\vs

\section{The $s$-Gauduchon flat manifolds}

\vsv

Throughout this section, we will assume that $(M^n,g)$ is a
Hermitian manifold with flat $s$-Gauduchon connection $\nabla^s$,
where $s\neq 0$. For any $p\in M$, there always exists a unitary
frame $e$ in a neighborhood of $p$ such that is $e$ is
$\nabla^s$-parallel. That is, $\theta^s=0$. Note that such a frame
is unique up to changes by constant valued unitary matrices. Let us
fix such a local frame $e$. Specializing the structure equations and
Bianchi identities with our condition $\theta = -s \gamma $, and
using the fact that  $^t\!\gamma' \varphi = - \tau$, we get
\begin{eqnarray}
\partial \varphi & = & (s-1) \ ^t\! \gamma' \varphi \\
\overline{\partial} \varphi & = & -s \ \overline{\gamma'} \varphi \\
\partial \gamma' & = & -s \ \gamma' \gamma' \\
^t\!\Theta & = & -s \ (\overline{\partial} \  ^t\! \gamma' - \partial \overline{\gamma' }) - s^2 (\overline{\gamma' } \ ^t\! \gamma' + \ ^t\! \gamma'  \overline{\gamma' } )
\end{eqnarray}
where the third equation is because of the vanishing of the $(2,0)$-component of $\Theta$ and the fact that $s\neq 0$. The $(2,1)$-part of the first Bianchi identity $d\tau = \ ^t\!\Theta \varphi - \ ^t\!\theta \tau$ leads to
\begin{equation}
[ \ (s-1) \ \overline{\partial} \  ^t\! \gamma' - s \ \partial \overline{\gamma'} + s(s-1) \ ( \  ^t\! \gamma' \overline{\gamma'}  + \overline{\gamma'}  \  ^t\! \gamma'  )\  ] \ \varphi = 0.
\end{equation}

\vsv

\begin{lemma} \label{lemma 3-1}
Suppose a Hermitian manifold $(M^n,g)$ is $s$-Gauduchon flat, where
$s\neq 0$. Let $e$ be a local $\nabla^s$-parallel unitary frame,
then the torsion components satisfy
\begin{eqnarray}
& & T^{\ell }_{ij,k}  -  T^{\ell}_{ik,j}  =  \sum_{r=1}^n \{ 2 (1-s) T^{\ell}_{ir} T^r_{jk} +  s T^{\ell}_{jr} T^r_{ik} - s T^{\ell}_{kr} T^r_{ij} \}  \\
& & (n-2)(s-1) \sum_{r=1}^n  \{ T^{\ell}_{ir} T^r_{jk}  + T^{\ell}_{jr} T^r_{ki} + T^{\ell}_{kr} T^r_{ij}  \} = 0 \\
& & 2(s-1) T^k_{ij,\overline{\ell } } + s (  \overline{  T^i_{k\ell , \overline{j} } } - \overline{   T^j_{k\ell , \overline{i}}     } ) = \sum_{r=1}^n \{ 2(s-s^2) T^r_{ij} \overline{ T^r_{k\ell }  } + 2(s-s^2) ( T^k_{ri} \overline{ T^j_{r\ell } } -  T^k_{rj} \overline{ T^i_{r\ell } }  ) + \\
\nonumber & & \hspace{5.8cm} + \  s^2 ( T^{\ell}_{ri} \overline{ T^j_{rk } } -  T^{\ell }_{rj} \overline{ T^i_{rk } }   )  \}
\end{eqnarray}
for any $1\leq i,j,k,\ell \leq n$, where the index after comma denotes the covariant derivative with respect to $\nabla^s$.
\end{lemma}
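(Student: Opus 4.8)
The plan is to read off all three identities by expanding, in the fixed $\nabla^s$-parallel unitary frame $e$, the matrix-valued form equations displayed just above the statement. The one observation that makes this mechanical is that, since $\theta^s=0$, the $\nabla^s$-covariant derivative of any tensor is computed by differentiating its components in $e$; concretely, for each torsion component $\partial T^\ell_{ij}=\sum_m T^\ell_{ij,m}\varphi_m$ and $\overline{\partial} T^\ell_{ij}=\sum_m T^\ell_{ij,\overline{m}}\,\overline{\varphi_m}$. Together with $\gamma'_{i\ell}=\sum_k T^\ell_{ik}\varphi_k$ and $\tau_\ell=\sum_{i,j}T^\ell_{ij}\varphi_i\wedge\varphi_j$, this turns each matrix equation into a system of scalar identities once we extract the coefficient of a fixed basis form.

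First I would prove the first identity by expanding the curvature relation $\partial\gamma'=-s\,\gamma'\gamma'$. Taking its $(i,\ell)$-entry, the left side is $\sum_{k,m}T^\ell_{ik,m}\,\varphi_m\wedge\varphi_k+\sum_k T^\ell_{ik}\,\partial\varphi_k$, where I would rewrite the second sum using $\partial\varphi=(s-1)\,^t\!\gamma'\varphi$; the right side is $-s\sum_m\gamma'_{im}\wedge\gamma'_{m\ell}$. Reading off the coefficient of $\varphi_j\wedge\varphi_k$ and using $T^\ell_{ij}=-T^\ell_{ji}$ to collect terms produces exactly $T^\ell_{ij,k}-T^\ell_{ik,j}$ on one side and the three quadratic terms with coefficients $2(1-s)$, $s$, $-s$ on the other. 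The third identity comes the same way from the $(2,1)$-Bianchi identity $[(s-1)\overline{\partial}\,^t\!\gamma'-s\,\partial\overline{\gamma'}+s(s-1)(^t\!\gamma'\,\overline{\gamma'}+\overline{\gamma'}\,^t\!\gamma')]\varphi=0$: taking the $k$-th entry and the coefficient of the relevant $(2,1)$-form, the term $(s-1)\overline{\partial}\,^t\!\gamma'$ contributes $2(s-1)T^k_{ij,\overline{\ell}}$, the factor $2$ again coming from the antisymmetry in $i,j$, while $-s\,\partial\overline{\gamma'}$ contributes the conjugate derivatives $s(\overline{T^i_{k\ell,\overline{j}}}-\overline{T^j_{k\ell,\overline{i}}})$, and the two matrix products expand into the $T\overline{T}$ terms with coefficients $2(s-s^2)$ and $s^2$.

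For the second (purely holomorphic) identity I would use the integrability $\partial^2\varphi=0$, the $(3,0)$-part of $d^2\varphi=0$. Differentiating $\partial\varphi=(s-1)\,^t\!\gamma'\varphi$ and substituting $\partial\,^t\!\gamma'=s\,^t\!\gamma'\,^t\!\gamma'$, which follows from $\partial\gamma'=-s\,\gamma'\gamma'$ together with the sign rule $^t(\gamma'\gamma')=-\,^t\!\gamma'\,^t\!\gamma'$ for products of matrices of $1$-forms, collapses everything to $(s-1)\,^t\!\gamma'\,^t\!\gamma'\varphi=0$; the coefficient of $\varphi_i\wedge\varphi_j\wedge\varphi_k$ is a multiple of the cyclic sum $\sum_r\{T^\ell_{ir}T^r_{jk}+T^\ell_{jr}T^r_{ki}+T^\ell_{kr}T^r_{ij}\}$, which is the content of the identity. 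For $n=2$ this cyclic sum vanishes identically as soon as two lower indices coincide, so the relation is vacuous; the explicit factor $(n-2)$ in the statement records exactly this vacuity and can be matched by deriving the same relation through a contraction of the first Bianchi identity, a route that trades the bare constant for a trace over the repeated index.

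The main obstacle will be bookkeeping rather than any conceptual difficulty: correctly propagating the antisymmetry $T^\ell_{ij}=-T^\ell_{ji}$ through every antisymmetrization, which is what produces all the combinatorial factors of $2$; tracking the sign incurred when transposing a product of matrices of $1$-forms; and handling the conjugations in the $(2,1)$ computation for the third identity, where one must be careful about which lower index carries the barred covariant derivative. Verifying that each quadratic term lands with precisely the constant claimed, and in particular pinning down the dimension-dependent coefficient in the second identity, is where I expect to spend most of the effort.
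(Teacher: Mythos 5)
Your proposal is correct and follows essentially the same route as the paper: the first identity from expanding $\partial\gamma'=-s\,\gamma'\gamma'$, the third from the $(2,1)$-part of the first Bianchi identity, and the second from the intermediate relation $(s-1)\,^t\!\gamma'\,^t\!\gamma'\,\varphi=0$ (your derivation via $\partial^2\varphi=0$ is just the $(3,0)$-part of $d^2\varphi=0$, i.e.\ the same computation the paper packages as the $(3,0)$-part of the first Bianchi identity $\partial\tau=s\,^t\!\gamma'\tau$). The only cosmetic caveat is the $(n-2)$ prefactor, which is not produced by any contraction but simply records that the coefficient extraction from a $(3,0)$-form is vacuous when $n=2$, where, as you note, the cyclic sum vanishes identically anyway.
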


\begin{proof}
The first identity comes from the fact that $\partial \gamma' = -s \gamma' \gamma' $ since $s\neq 0$. When $n\geq 3$, if we combine the identity $\partial \gamma' = - s \gamma ' \gamma'$ with the $(3,0)$-part of the first Bianchi identity, $\partial \tau = s \ ^t\gamma' \tau $, we get $(s-1) \ ^t\! \gamma' \ ^t\! \gamma' \varphi =0$. This leads to the second equation. The third equation comes from $(18)$.
\end{proof}

\vsv

By the first two equations in the above lemma, we get

\begin{lemma}\label{lemma 3-2}
Let $(M^n,g)$ be a Hermitian manifold that is $s$-Gauduchon flat,
where $s\neq 1$ and $n\geq 3$. Then for any indices $i,j,k,\ell$, it
holds
\begin{equation}
T^{\ell }_{ij,k}  -  T^{\ell}_{ik,j}  =  (2-s) \sum_{r=1}^n  T^{\ell}_{ir} T^r_{jk}
\end{equation}
\end{lemma}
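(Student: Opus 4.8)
The plan is to combine the first two displayed identities of Lemma \ref{lemma 3-1} and to exploit the antisymmetry of the Chern torsion in its lower indices. First I would recall that, since $T^c(e_i,e_j) = -T^c(e_j,e_i)$, we have $T^k_{ij} = -T^k_{ji}$ (equivalently, only the antisymmetric part of $T^k_{ij}$ survives in $\tau_k = \sum T^k_{ij}\varphi_i\wedge\varphi_j$). To keep the algebra transparent, abbreviate the three contracted products that occur on the right-hand side of the first identity as
$$P_1 = \sum_{r} T^{\ell}_{ir} T^r_{jk}, \quad P_2 = \sum_{r} T^{\ell}_{jr} T^r_{ik}, \quad P_3 = \sum_{r} T^{\ell}_{kr} T^r_{ij},$$
so that the first identity of Lemma \ref{lemma 3-1} reads $T^{\ell}_{ij,k} - T^{\ell}_{ik,j} = 2(1-s)P_1 + sP_2 - sP_3$.

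Next I would extract an algebraic relation among $P_1,P_2,P_3$ from the second identity. Because we assume $n\geq 3$ and $s\neq 1$, the scalar factor $(n-2)(s-1)$ is nonzero and may be divided out, leaving the cyclic relation
$$\sum_r\{T^{\ell}_{ir}T^r_{jk} + T^{\ell}_{jr}T^r_{ki} + T^{\ell}_{kr}T^r_{ij}\} = 0.$$
Here the antisymmetry is the crucial input: rewriting $T^r_{ki} = -T^r_{ik}$ turns the middle sum into $-P_2$, while the first and third sums are exactly $P_1$ and $P_3$. Hence the cyclic relation collapses to $P_1 - P_2 + P_3 = 0$, that is, $P_2 = P_1 + P_3$.

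Finally I would substitute $P_2 = P_1 + P_3$ into the expression for $T^{\ell}_{ij,k} - T^{\ell}_{ik,j}$: the $P_3$ terms cancel, and the $P_1$ terms combine as $2(1-s)P_1 + sP_1 = (2-s)P_1$, which is precisely the claimed formula. There is no genuine analytic obstacle in this lemma; the hypotheses $n\geq 3$ and $s\neq 1$ are used only to guarantee $(n-2)(s-1)\neq 0$ so that the second identity can be divided through. The sole point requiring care is the index bookkeeping — in particular, recognizing that the cyclic combination in the second identity aligns with $P_1$, $-P_2$, $P_3$ once the lower-index antisymmetry of the torsion is invoked, after which the result follows by an elementary linear substitution.
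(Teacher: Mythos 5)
Your proposal is correct and is exactly the derivation the paper intends: the paper states Lemma \ref{lemma 3-2} follows ``by the first two equations'' of Lemma \ref{lemma 3-1}, and your computation --- dividing the second identity by $(n-2)(s-1)\neq 0$, using $T^r_{ki}=-T^r_{ik}$ to get $P_2=P_1+P_3$, and substituting into the first identity to obtain $(2-s)P_1$ --- is precisely that omitted calculation.
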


This identity shows that the case of the Bismut connection $(s=2)$
is special as the right hand side would vanish when $s=2$. Next,
recall that $\eta_i = \sum_k T^k_{ki}$, and let us denote by
$$ |\eta|^2 = \sum_{i=1}^n |\eta_i|^2,  \ \ \
|T|^2 = \sum_{i,j,k=1}^n |T^i_{jk}|^2, \ \ \ \mbox{and} \ \ \chi =
\sum_{i=1}^n \eta_{i, \overline{i}}. $$ In the last equation of
Lemma \ref{lemma 3-1}, choose $i=k$ and $j=\ell $ and sum them up
from $1$ to $n$, we get
$$ 2(s-1)\chi + 2s \overline{\chi } = s^2 |T|^2 + 2(s-\frac{3}{2}s^2) |\eta|^2. $$
Since the right hand side is a real number, we see that $\chi$ must be real, so we get

\begin{lemma} \label{lemma 3-3}
Under a local unitary $\nabla^s$-parallel frame $e$, the quantity
$\chi = \sum_{i=1}^n \eta_{i, \overline{i}}$ satisfies the identity
\begin{equation}
(2s-1)\chi = \frac{s^2}{2} |T|^2 + s(1-\frac{3}{2}s) |\eta |^2.
\end{equation}
\end{lemma}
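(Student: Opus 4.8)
The plan is to obtain the identity by a single contraction of the third (and last) identity in Lemma~\ref{lemma 3-1}, which is exactly the move indicated in the discussion preceding the statement. Concretely, I would set $i=k$ and $j=\ell$ in that identity and sum the result over $i$ and $j$ from $1$ to $n$. Two elementary facts make every sum collapse: the definition $\eta_i=\sum_k T^k_{ki}$ together with the skew-symmetry $T^\ell_{ij}=-T^\ell_{ji}$ of the torsion in its lower indices, and the fact that $e$ is $\nabla^s$-parallel, so that covariant differentiation acts componentwise and commutes with the traces. In particular $\sum_i T^i_{ij,\overline{j}}=\eta_{j,\overline{j}}$, whence $\sum_{i,j}T^i_{ij,\overline{j}}=\chi$.

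For the left-hand side, the term $2(s-1)T^k_{ij,\overline{\ell}}$ contracts to $2(s-1)\chi$. The two conjugated derivative terms need a relabeling: swapping the summation names $i\leftrightarrow j$ and using skew-symmetry gives $\sum_{i,j}T^j_{ij,\overline{i}}=-\chi$, so that both $s\,\overline{T^i_{k\ell,\overline{j}}}$ and $-s\,\overline{T^j_{k\ell,\overline{i}}}$ contribute $s\overline{\chi}$. Thus the left-hand side becomes $2(s-1)\chi+2s\overline{\chi}$, the expression recorded just above the statement.

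For the right-hand side, each of the five quadratic terms reduces to a multiple of $|T|^2$ or $|\eta|^2$. The three ``square'' terms $2(s-s^2)T^r_{ij}\overline{T^r_{ij}}$, $-2(s-s^2)T^i_{rj}\overline{T^i_{rj}}$ and $s^2T^j_{ri}\overline{T^j_{ri}}$ give $2(s-s^2)|T|^2-2(s-s^2)|T|^2+s^2|T|^2=s^2|T|^2$. The two remaining cross terms are traces of the torsion; using $\sum_i T^i_{ri}=-\eta_r$ they yield $2(s-s^2)|\eta|^2$ and $-s^2|\eta|^2$, for a net $2(s-\frac{3}{2}s^2)|\eta|^2$. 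Altogether the contracted identity reads
\[
2(s-1)\chi+2s\overline{\chi}=s^2|T|^2+2\big(s-\tfrac{3}{2}s^2\big)|\eta|^2 .
\]

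Finally, since the right-hand side is manifestly real, so is the left-hand side; writing $\chi=a+\I b$, the imaginary part of $2(s-1)\chi+2s\overline{\chi}$ equals $-2b$, forcing $b=0$, i.e.\ $\chi\in\R$. Substituting $\overline{\chi}=\chi$ collapses the left-hand side to $2(2s-1)\chi$, and dividing by $2$ gives the asserted identity $(2s-1)\chi=\frac{s^2}{2}|T|^2+s(1-\frac{3}{2}s)|\eta|^2$. I expect the only real work to be the careful index bookkeeping in the contraction—keeping track of the signs produced by the skew-symmetry of $T$ and matching each of the five quadratic terms to the correct invariant; there is no analytic difficulty, and the reality of $\chi$ drops out automatically from the fact that $|T|^2$ and $|\eta|^2$ are real.
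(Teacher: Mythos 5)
Your proposal is correct and follows exactly the paper's route: contract the last identity of Lemma \ref{lemma 3-1} with $i=k$, $j=\ell$, sum, arrive at $2(s-1)\chi+2s\overline{\chi}=s^2|T|^2+2(s-\tfrac{3}{2}s^2)|\eta|^2$, and use the reality of the right-hand side to conclude $\chi\in\R$. The index bookkeeping and signs all check out, so nothing further is needed.
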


\vsv

Now we are ready to prove Proposition \ref{intro5}. For
$s=\frac{1}{2}$, the above identity gives $\frac{1}{8} |\eta |^2 +
\frac{1}{8} |T|^2 = 0$, so $T=0$ everywhere, and $g$ is K\"ahler. If
$s\neq 0$ and $\eta =0$, then $\chi =0$, so the above equation leads
to $T=0$ again. This proves Proposition \ref{intro5}.

\vsv

Next, by the equation $\overline{\partial }\varphi = -s\overline{\gamma'}\varphi$, we get
$$ \overline{\partial } \eta = - \sum_{i,j=1}^n \big(\eta_{i,\overline{j} } + s \sum_{k=1}^n \eta_k \overline{T^i_{jk} } \big) \ \varphi_i \wedge \overline{\varphi_j}. $$
Since  \ $ n \sqrt{-1} \ \big(\sum_{i,j=1}^n a_{ij} \varphi_i \wedge \overline{\varphi_j} \big) \wedge \omega^{n-1} = \big( \sum_{i=1}^n a_{ii} \big) \ \omega^n$, we obtain the following
\begin{equation}
n \sqrt{-1} \ \overline{\partial } \eta \wedge \omega^{n-1}  = - (\chi + s| \eta |^2 ) \ \omega^n .
\end{equation}
Combining this with Lemma \ref{lemma 3-3}, we get

\begin{lemma} \label{lemma 3-4}
Let $(M^n,g)$ be a $\nabla^s$-flat Hermitian manifold. Then it holds
\begin{equation}
(2s-1)n\sqrt{-1} \partial \overline{\partial }  \omega^{n-1}
= \big[ (8s-s^2-4)\ |\eta|^2 - s^2 |T|^2 \big] \ \omega^n
\end{equation}
\end{lemma}

\vsv

When the $s$-Gauduchon flat manifold $M^n$ is compact, the
integral of the left hand side is zero. Thus we immediately get
the following
\begin{lemma}\label{lemma 3-5}
Let $(M^n,g)$ be a compact $\nabla^s$-flat Hermitian manifold. Then
it holds
\begin{equation}
 (8s-s^2-4) \int_M |\eta |^2 \  \omega^n = s^2 \int_M |T|^2 \ \omega^n
\end{equation}
\end{lemma}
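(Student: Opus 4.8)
The plan is to reduce Lemma \ref{lemma 3-5} to the pointwise identity already established in Lemma \ref{lemma 3-4} by integrating over the compact manifold $M$ and discarding the left-hand side as the integral of an exact top-degree form. All the analytic content lives in Lemma \ref{lemma 3-4}; the present statement is its global, integrated shadow.

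First I would record the degree bookkeeping: since $\omega$ is a $(1,1)$-form, $\omega^{n-1}$ has bidegree $(n-1,n-1)$, so $\partial\overline{\partial}\omega^{n-1}$ has bidegree $(n,n)$ and is therefore a top-degree form on the $n$-dimensional complex manifold $M$, exactly like $\omega^n$. Next, using $d=\partial+\overline{\partial}$ together with $\overline{\partial}^2=0$, one has $\partial\overline{\partial}\omega^{n-1}=d\big(\overline{\partial}\omega^{n-1}\big)$, which exhibits the left-hand side of Lemma \ref{lemma 3-4} (up to the scalar factor $(2s-1)n\sqrt{-1}$) as an exact $2n$-form. Because $M$ is compact without boundary, Stokes' theorem gives $\int_M \partial\overline{\partial}\omega^{n-1}=0$, so the integral of the whole left-hand side of Lemma \ref{lemma 3-4} vanishes irrespective of the value of $s$.

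It then remains only to integrate the identity of Lemma \ref{lemma 3-4} over $M$. The left-hand side contributes nothing by the previous step, and, recalling from the preliminaries that $|\eta|^2$ and $|T|^2$ are globally well-defined functions while $\omega^n$ is a positive multiple of the volume form, the right-hand side yields $\int_M \big[(8s-s^2-4)|\eta|^2-s^2|T|^2\big]\,\omega^n=0$. Transposing the $|T|^2$ term produces precisely the asserted equality.

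Since every genuine difficulty has already been absorbed into Lemma \ref{lemma 3-4}, I do not expect a real obstacle here; the only point deserving a moment's care is confirming that the left-hand side is truly a $d$-exact form of top degree, so that Stokes applies. This rests on the standard identity $\partial\overline{\partial}=d\overline{\partial}$ on forms, combined with the compactness and boundarylessness of $M$, and it is what makes the left-hand side drop out cleanly even in the degenerate case $s=\frac{1}{2}$.
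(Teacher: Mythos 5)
Your proposal is correct and is essentially identical to the paper's argument: the paper likewise obtains Lemma \ref{lemma 3-5} by integrating the pointwise identity of Lemma \ref{lemma 3-4} over the compact manifold and noting that the integral of the left-hand side vanishes. Your explicit justification via $\partial\overline{\partial}\omega^{n-1}=d(\overline{\partial}\omega^{n-1})$ and Stokes' theorem is exactly the (unstated) reason the paper's one-line deduction works.
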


\vsv

Since the two roots of $8s -s^2-4$ are $4\pm 2\sqrt{3}$, when either
$s\geq 4+ 2\sqrt{3}$, or $s\leq 4-2\sqrt{3}$ and $s\neq 0$, the left
hand side is nonpositive, which will force $T$ to be identically
zero, meaning that $g$ is K\"ahler. So we have proved Theorem
\ref{intro2}.

\vsv

Theorem \ref{intro2} means that when $M^n$ is compact, we may assume
that $4-2\sqrt{3} < s < 4+2\sqrt{3}$, or approximately,
$$0.54<s<7.46$$

Next, note that $\eta_i = \sum_k T^k_{ki}$ is the sum of at most
$(n-1)$ terms, since $T^i_{ii}=0$. By the inequality $|a_1 + \cdots
+ a_{n-1}|^2 \leq (n-1) ( |a_1|^2 + \cdots + |a_{n-1}|^2)$, we know
that $$ |\eta |^2 \leq (n-1) |T|^2. $$ Plug this into the identity
in Lemma \ref{lemma 3-5}, we get
\begin{equation}
F(s)\int_M|T|^2 \omega^n \leq 0,  \ \ \ \ \mbox{where} \ \ \  \
F(s) = ns^2 -8(n-1)s+4(n-1).
\end{equation}

Clearly, when $F(s)>0$, the above inequality implies $T=0$ everywhere.
The two roots of $F$ are exactly the two dimension-dependent constants
$$ a_n^{\pm} : = \ \frac{1}{n} \big[ 4(n-1) \pm 2\sqrt{3n^2-7n+4 } \big] \  $$ given in the introduction. Thus we have completed the proof of
Theorem \ref{intro3}.

\vsv

Note that the improvement of Theorem \ref{intro3} to Theorem
\ref{intro2} works better when $n$ is smaller, as $a^{\pm}_n
\rightarrow 4\pm 2\sqrt{3}$ when $n\rightarrow \infty$. When $n=3$,
for instance, $a^{\pm}_3 = \frac{2}{3}(4\pm \sqrt{10})$, so we just
need to consider the range
$$ 0.56 \leq s \leq 4.77 $$
for any potential non-K\"ahler $s$-Gauduchon flat compact threefold.

\vsv

Next let us consider the locally conformally K\"ahler case. In this
case we have the relationship $|\eta|^2 = \frac{(n-1)}{2}|T|^2$ by
Lemma \ref{lemma 2-2}. Plug this again into the identity in Lemma
\ref{lemma 3-5}, we get
\begin{equation}
\big[ 8(n-1)s - 4(n-1) - (n+1)s^2 \big] \int_M |T|^2 \omega^n = 0
\end{equation}
for compact Hermitian manifold $(M^n,g)$ that is $\nabla^s$-flat and
locally conformally K\"ahler. When the coefficient is not zero, it
will force $T=0$, so $g$ would be K\"ahler. The zeroes of the
polynomial are precisely the two constant $b_n^{\pm}$ given in the
introduction. So we have completed the proof of Theorem \ref{intro4}
here.

\vsv

Now let us restrict ourselves to the two dimensional case. In this case,
$\eta_1=-T^2_{12}$, $\eta_2=T^1_{12}$, so $|T|^2=2 |\eta |^2$. Plug this
 into the identity in Lemma \ref{lemma 3-5}, we get
\begin{equation}
(3s-2)(s-2) \int_{M^2} |T|^2 \omega^2 = 0.
\end{equation}
Therefore we obtained the following

\begin{lemma} \label{lemma 3-6}
Let $(M^2,g)$ be a compact Hermitian surface with flat $s$-Gauduchon
connection. If $s \neq 2$  and $s\neq \frac{2}{3}$, then $g$ is
K\"ahler.
\end{lemma}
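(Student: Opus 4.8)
The plan is to exploit the fact that in complex dimension two the Chern torsion has essentially only two independent components, reduce every quadratic torsion quantity to the single scalar $|T|^2$, and then feed this reduction into the integral identity of Lemma \ref{lemma 3-5}.

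First I would record the consequence of the antisymmetry of $T^c$ in its lower indices: since $T^k_{ij} = -T^k_{ji}$ and the entries with a repeated lower index vanish, when $n=2$ the only possibly nonzero components are $T^1_{12}$ and $T^2_{12}$ (together with their negatives $T^1_{21}$ and $T^2_{21}$). From the defining relation $\eta_i = \sum_k T^k_{ki}$ this gives $\eta_1 = T^2_{21} = -T^2_{12}$ and $\eta_2 = T^1_{12}$. Summing $|T^k_{ij}|^2$ over all indices, each of $T^1_{12}$ and $T^2_{12}$ contributes twice (once in the form $T^k_{12}$ and once as $T^k_{21}$), so $|T|^2 = 2(|T^1_{12}|^2 + |T^2_{12}|^2) = 2|\eta|^2$. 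This is the only step of genuine content, and it is purely algebraic, resting entirely on the low-dimensional structure of the torsion.

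Next I would substitute $|\eta|^2 = \tfrac12 |T|^2$ into the identity of Lemma \ref{lemma 3-5}, namely $(8s - s^2 - 4)\int_M |\eta|^2 \,\omega^2 = s^2 \int_M |T|^2 \,\omega^2$. Collecting terms produces a single scalar multiple of $\int_M |T|^2 \,\omega^2$; the coefficient simplifies as $\tfrac12(8s - s^2 - 4) - s^2 = -\tfrac12(3s^2 - 8s + 4)$, and factoring the quadratic as $(3s-2)(s-2)$ yields $(3s-2)(s-2)\int_M |T|^2 \,\omega^2 = 0$.

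Finally, since $|T|^2 \geq 0$ and $\omega^2$ is a positive volume form, the integral $\int_M |T|^2 \,\omega^2$ is nonnegative and vanishes only when $T \equiv 0$. Hence, as soon as the coefficient $(3s-2)(s-2)$ is nonzero, that is, whenever $s \neq 2$ and $s \neq \tfrac{2}{3}$, we are forced to have $T \equiv 0$, so $g$ is K\"ahler. I do not anticipate a serious obstacle: the analytic substance, namely compactness together with the Stokes-type vanishing of $\int_M \partial\overline{\partial}\omega^{n-1}$, is already packaged into Lemma \ref{lemma 3-5}, and what remains is the dimension-two identity $|T|^2 = 2|\eta|^2$ plus elementary factoring. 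The only point requiring care is verifying this two-dimensional relation and the resulting constant correctly, since an arithmetic slip would shift the two excluded values of $s$.
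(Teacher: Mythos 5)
Your proposal is correct and follows exactly the paper's own argument: the two-dimensional identities $\eta_1=-T^2_{12}$, $\eta_2=T^1_{12}$ give $|T|^2=2|\eta|^2$, and substituting into Lemma \ref{lemma 3-5} yields $(3s-2)(s-2)\int_{M^2}|T|^2\,\omega^2=0$, forcing $T\equiv 0$ for $s\neq 2,\tfrac{2}{3}$. The arithmetic of the coefficient and its factorization are verified correctly.
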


Therefore in order to prove Theorem \ref{intro1}, we just need to
deal with the $s=\frac{2}{3}$ case, namely, the minimal Gauduchon
connection $\nabla^{\frac{2}{3}}$.

\vs

\vs

\section{Surfaces with  flat minimal connection}\label{surface}

\vsv

In this section, we will assume that $(M^2,g)$ is a compact
Hermitian surface with flat $\nabla^{\frac{2}{3}}$, and our goal is
to use the Bochner formula to conclude that $g$ must be K\"ahler,
thus proving Theorem \ref{intro1} stated in the introduction.

\vsv

Let $s=\frac{2}{3}$, and $e$ be a $\nabla^s$-parallel local unitary
frame. Let us denote by $a=T^1_{12}= \eta_2$, $b=T^2_{12}=-\eta_1$.
Then $\lambda = |a|^2+|b|^2=|\eta|^2$ is a nonnegative smooth
function on $M^2$. Since $2(1-s)=s$, in the first identity of Lemma
\ref{lemma 3-1}, if we let $i=k=1$, $j=2$, we get
$$ T^{\ell }_{12,1} = -2s T^{\ell }_{12}T^2_{12} .$$
Similarly, if we let $i=k=2$ and $j=1$, we get
$$ T^{\ell }_{12,2} = 2s T^{\ell }_{12}T^1_{12} .$$
That is, we have the following
\begin{equation}
 a_1 = -2s \ ab, \ \ \  b_1 = -2s \ b^2, \ \ \ a_2 = 2s \ a^2, \
  \ \  b_2=2s \ ab
\end{equation}
Next, we look at the last identity in Lemma \ref{lemma 3-1}. Let
$i=1$, $j=2$, and $k=\ell$, we get
$$ T^k_{12,\overline{k}} = 2s T^k_{12} (\overline{T^2_{2k} } +
\overline{ T^1_{1k} } ), $$
that is,
\begin{equation}
 a_{\overline{1}} = -2s \ a\overline{b}, \ \ \  b_{\overline{2}}
 = 2s \ \overline{a}b.
\end{equation}
Similarly, by letting respectively $i=k=1$, $j=\ell =2$ or $i=\ell
=1$, $j=k=2$ in the last identity of Lemma \ref{lemma 3-1}, and
using the results for $a_{\overline{1}}$ and $b_{\overline{2}}$
above, we get
\begin{equation}
 a_{\overline{2}} = 2s \ |a|^2 , \ \ \  b_{\overline{1}} = -2s \ |b|^2.
\end{equation}
From this, we get
\begin{eqnarray*}
\lambda_1 & = & -4s \lambda b , \ \ \ \ \
\lambda_2 \ = \ 4s \lambda a \\
 \lambda_{1\overline{1}} & = & 24s^2 \lambda |b|^2 ,   \ \
  \lambda_{2\overline{2}}\  = \ 24s^2 \lambda |a|^2
\end{eqnarray*}
Thus
$$ \sum_i \lambda_{i\overline{i}} = 24s^2\lambda^2, \ \ \ \
\sum_i \lambda_i \overline{\eta_i} = 4s\lambda^2, \ \ \ \
\sum_i |\lambda_i|^2 = 16s^2 \lambda^3. $$

Now let us consider the smooth function $f=\log (\lambda + \eps )$
on $M^2$, where $\eps >0$ is a constant. Since
$\overline{\partial}\varphi = -s \overline{\gamma'} \varphi$, we have
\begin{eqnarray*}
\partial \overline{\partial} f & = & - \overline{\partial} \big( \sum_i f_i \varphi_i\big)  \\
& = & \sum_{i,j} \big( f_{i\overline{j}} + s \sum_k f_k \overline{T^i_{jk}} \big) \ \varphi_i \wedge \overline{ \varphi_j}
\end{eqnarray*}
Therefore,
\begin{eqnarray*}
2\sqrt{-1} \partial \overline{\partial} f \wedge \omega & = & \big( \sum_i f_{i\overline{i}} + s\sum_i f_i \overline{\eta_i} \big)\ \omega^2 \\
& = & \big( \frac{1}{\lambda +\eps } \sum_i \lambda_{i\overline{i}} - \frac{1}{(\lambda +\eps )^2 } \sum_i |\lambda_i|^2   + \frac{s}{\lambda +\eps }  \sum_i \lambda_i \overline{\eta_i} \big)\ \omega^2 \\
& = & 4s^2 \frac{(3\lambda + 7\eps )} {(\lambda + \eps )^2}
\lambda^2 \omega^2
\end{eqnarray*}

On the other hand, by Lemma \ref{lemma 3-4}, specialized to our case
of $n=2$ and $s=\frac{2}{3}$, we get $\partial \overline{\partial}
\omega =0$. So if we integrate the above equality, we get $\lambda
\equiv 0$ on $M^2$, that is, $(M^2,g)$ is K\"ahler. This completes
the proof of Theorem \ref{intro1}.

\vs


\vs

\end{document}